\newtheorem{theorem}{Theorem}
\newtheorem*{corollary}{Corollary}
\begin{document}
\title{On an exotic topology of the integers}
\author{Rezs\H o L.~Lovas\thanks{Supported by National Science Research Foundation OTKA No.\ NK 81402.}\and Istv\'an Mez\H o\thanks{Supported by National Science Research Foundation OTKA No.\ NK 79129.}}
\date{2010}
\maketitle

\begin{abstract}
We study some interesting properties of F\"urstenberg's topology of the integers. We show that it is metrizable, totally disconnected, and $(\mathbb Z,+,\cdot)$ is a topological ring with respect to this topology. As an application, we show that any two disjoint sets of primes can be separated by arithmetic progressions.
\end{abstract}

\section{Introduction}

It has been known since Euclid that there are infinitely many prime numbers. The simplest proof runs as follows: if there were a finite number of primes, say $p_1,\dots,p_k$, then $p_1p_2\cdots p_k+1$ would have at least one prime divisor different from each of $p_1,\dots,p_k$. Since then, many other proofs of this fact and much stronger results on the distribution of primes have been found. See Part 1 of \cite{AZ} for a delightful account of some highlights of this history.

In 1955, H.~F\"urstenberg found a topological proof for the fact that there are infinitely many primes \cite{F}. He defined a strange topology on the set $\mathbb Z$ of integers, and, assuming that there is only a finite number of primes, he came to a contradiction. In this topology a set is open if and only if, roughly speaking, each of its points is contained together with an infinite arithmetic progression; we present a precise definition in the next section. This topology is, of course, different from the ordinary topology of $\mathbb Z$, i.e., the subspace topology inherited from $\mathbb R$. The latter is rather bland: every subset of $\mathbb Z$ is open.

Mathematicians usually think that number theory and topology are completely disjoint areas of mathematics. The construction outlined in the previous paragraph shows very vividly that this is not so: this topology establishes a connection between these seemingly so distant branches. That is why we found it so attractive and why we wished to study it in detail. For F\"urstenberg's topology, we asked and answered some of the most natural questions. We also found an application in number theory.

In section 2 we briefly recall F\"urstenberg's proof, and we study some further interesting properties of his topology. Among others we show that it is metrizable, and we explicitly present a translation invariant distance function inducing it. In a metrizable space every two disjoint closed sets can be separated by disjoint open sets containing them. This also has some interesting consequences, which we present in section 3: any two disjoint sets of primes can be separated by arithmetic progressions in a certain sense, which we describe precisely in theorem 4. To our knowledge, the result described in that theorem is completely new.

\section{Description of the topology and its properties}

We denote the set of natural numbers including $0$ by $\mathbb N$, and we let $\mathbb N^*:=\mathbb N\setminus\{0\}$. Similarly, $\mathbb Z^*:=\mathbb Z\setminus\{0\}$.

For $a\in\mathbb Z$ and $b\in\mathbb N^*$ we define
\[
a+b\mathbb Z:=\{a+bn:n\in\mathbb Z\}.
\]
If $a=0$, we simply write $b\mathbb Z$ instead of $0+b\mathbb Z$.
We say that a set $A\subset\mathbb Z$ is \emph{open} if, for each point $a\in A$, there is a number $b\in\mathbb N^*$ such that $a+b\mathbb Z\subset A$. In other words, a subset of $\mathbb Z$ is open if each of its points is contained in an arithmetic progression belonging to the set. Let $\mathcal T$ be the set of all open sets in this sense. It is easy to see that $\mathcal T$ satisfies the usual axioms for a topology, thus $(\mathbb Z,\mathcal T)$ becomes a topological space. So the arithmetic progressions form a basis for the topology. Obviously, the basic sets $a+b\mathbb Z$ are open. More surprisingly, they are also closed, since the complement of $a+b\mathbb Z$ is the union of other arithmetic progressions with the same difference.

How can we use this topology to show that there is an infinite number of primes? Suppose, indirectly, that there is only a finite number of primes, and denote these by $p_1,\dots,p_k$. Then the set
\[
C:=\bigcup_{i=1}^kp_k\mathbb Z
\]
is closed, as it is a \emph{finite} union of closed sets. On the other hand, all integers but $-1$ and $1$ have at least one prime divisor, therefore $C=\mathbb Z\setminus\{-1,1\}$. Thus its complement $\{-1,1\}$ is open, which is clearly a contradiction, and this proves that there are indeed infinitely many prime numbers.

Now we turn to the metrizability property of the topology $\mathcal T$.
If $n\in\mathbb Z^*$, then we define
\[
\|n\|:=\frac 1{\max\{k\in\mathbb N^*:\,1\mid n,\dots,k\mid n\}},
\]
i.e., then ``norm'' $\|n\|$ is the reciprocal of the greatest natural number $k$ with the property that the natural numbers $1,\dots,k$ are all divisors of $n$. Thus, for example,
\[
\|1\|=1,\ \|2\|=\frac 12,\ \|3\|=1,\ \|4\|=\frac 12,\ \|5\|=1,
\ \|6\|=\frac 13,\dots,\|n!\|\le\frac 1n,
\]
and so on. Then, in particular, $\|-n\|=\|n\|$. Furthermore, we set $\|0\|:=0$. Then we define the distance of the integers $m$ and $n$ by $d(m,n):=\|m-n\|$.
\begin{theorem}
With this distance function, $(\mathbb Z,d)$ becomes a metric space, and the metric $d$ induces the topology $\mathcal T$.
\end{theorem}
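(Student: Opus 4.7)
The plan is to split the proof into two independent parts: verifying the metric axioms, and showing that the open balls generated by $d$ form a basis equivalent to $\{a+b\mathbb Z : a \in \mathbb Z, b\in \mathbb N^*\}$.

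For the metric axioms, positive definiteness and symmetry are immediate: $\|n\|=0$ iff $n=0$ by definition, and $k\mid n$ iff $k\mid -n$ gives $\|-n\|=\|n\|$. The only nontrivial axiom is the triangle inequality, and here I would prove the sharper ultrametric estimate
\[
\|a+b\|\le\max\{\|a\|,\|b\|\}.
\]
Writing $\|a\|=1/K_a$ and $\|b\|=1/K_b$, every integer in $\{1,\dots,\min(K_a,K_b)\}$ divides both $a$ and $b$, hence divides $a+b$; therefore $K_{a+b}\ge\min(K_a,K_b)$, which is the claimed inequality (with the obvious adjustment when $a$ or $b$ is $0$). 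Setting $a=m-p$, $b=p-n$ yields $d(m,n)\le\max\{d(m,p),d(p,n)\}\le d(m,p)+d(p,n)$.

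For the equivalence of topologies, the key observation is the following explicit description of balls. Let $L_k:=\operatorname{lcm}(1,2,\dots,k)$. Then for every $m\in\mathbb Z$,
\[
\|m\|\le\tfrac 1k\ \Longleftrightarrow\ 1,2,\dots,k \text{ all divide } m \ \Longleftrightarrow\ L_k\mid m.
\]
Since the set of attained values of $\|\cdot\|$ is $\{0,1,\tfrac12,\tfrac13,\dots\}$, the ball $B(a,r):=\{n:\|n-a\|<r\}$ of radius $1/(k-1)\ge r>1/k$ is exactly the arithmetic progression $a+L_k\mathbb Z$. Hence every $d$-ball is $\mathcal T$-open, so the metric topology is coarser than or equal to $\mathcal T$.

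Conversely, given a basic $\mathcal T$-neighborhood $a+b\mathbb Z$ of $a$, I would take $k\ge b$ and observe $B(a,1/(k-1))=a+L_k\mathbb Z\subset a+b\mathbb Z$ because $b\mid L_k$. Thus each $\mathcal T$-basic open set around any of its points contains a $d$-ball, so $\mathcal T$ is also coarser than the metric topology. The two topologies therefore coincide. I expect the main obstacle to be purely presentational: spotting the clean ultrametric proof of the triangle inequality and isolating the lemma $\|m\|\le1/k\iff L_k\mid m$, after which both halves of the statement become routine.
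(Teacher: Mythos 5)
Your proof is correct, and at its core it rests on the same divisibility facts as the paper's argument, but you package them more sharply. For the triangle inequality the paper proves the subadditive bound $\|m+n\|\le\|m\|+\|n\|$ by exactly the computation you give (all of $1,\dots,\min(K_m,K_n)$ divide $m+n$); you state the conclusion in its natural stronger form $\|m+n\|\le\max\{\|m\|,\|n\|\}$, which moreover exhibits $d$ as an ultrametric. For the equivalence of topologies the paper works with two one-sided inclusions: given a $d$-ball $B(a,r)$ it picks $b$ with $\|b\|<r$ (e.g.\ $b=n!$ with $1/n<r$) and checks $a+b\mathbb Z\subset B(a,r)$, and conversely it checks $B(a,1/b)\subset a+b\mathbb Z$; you instead isolate the lemma $\|m\|\le 1/k\iff L_k\mid m$ and conclude that every ball is \emph{exactly} a progression $a+L_k\mathbb Z$, from which both inclusions are immediate. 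Your version buys a complete structural picture (every ball is a clopen arithmetic progression with difference $L_k$), at the cost of one extra claim about the values of $\|\cdot\|$, and on that point there is a small but harmless inaccuracy: the set of attained values is not all of $\{0,1,\tfrac12,\tfrac13,\dots\}$; for instance $\tfrac15$ is never attained, since $2\mid n$ and $3\mid n$ force $6\mid n$. Your argument only needs that every value lies in $\{0\}\cup\{1/k:k\in\mathbb N^*\}$ (so that $\|m\|<r\le 1/(k-1)$ forces $\|m\|\le 1/k$), so nothing breaks, but the sentence should be weakened to that inclusion. Also handle the trivial edge case $k=1$ (the radius $1/(k-1)$ is undefined): when $b=1$ or $r>1$ take $k=2$, or note that the ball is all of $\mathbb Z=a+L_1\mathbb Z$.
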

\begin{proof}
All axioms of a metric space are trivially satisfied except the triangle inequality.
To prove that, first we show that $\|m+n\|\le\|m\|+\|n\|$ if $m,n\in\mathbb Z$. It is sufficient to show this when both $m$ and $n$ are different from $0$.

First suppose $\|m\|\le\|n\|$. Then the numbers $1,2,\dots,\frac 1{\|n\|}$ are all divisors of both $m$ and $n$, thus they are divisors of $m+n$ as well, therefore
\[
\|m+n\|\le\|n\|<\|m\|+\|n\|.
\]
For $\|m\|>\|n\|$ we get the assertion by interchanging the role of $m$ and $n$. From this, the triangle inequality follows easily:
\[
d(m,n)=\|m-n\|=\|m-l+l-n\|\le\|m-l\|+\|l-n\|=d(m,l)+d(l,n)
\]
for any $l,m,n\in\mathbb Z$.

Now we show that the metric $d$ induces $\mathcal T$, indeed. Let $A\subset\mathbb Z$ be an open set with respect to the metric $d$. If $a\in A$, then there is a positive number $r$ such that the open ball $B(a,r)$ with center $a$ and radius $r$ is contained in $A$. Let $b\in\mathbb N^*$ be such that $\|b\|<r$. (Such a $b$ exists: e.g., if $n$ is such that $1/n<r$, then take $b:=n!$.) If $a+bn$ is an arbitrary element of $a+b\mathbb Z$, then
\[
d(a,a+bn)=\|a-(a+bn)\|=\|bn\|\le\|b\|<r,
\]
since the divisors of $b$ are divisors of $bn$ as well. This means that
$a+bn\in B(a,r)$ and $a+b\mathbb Z\subset B(a,r)\subset A$, thus $A$ is open also with respect to the topology $\mathcal T$.

To see the converse, let $A\in\mathcal T$. Thus, if $a\in A$, then $a+b\mathbb Z\subset A$ with some $b\in\mathbb N^*$. Let $r:=1/b$. If $c\in B(a,r)$, i.e., $\|a-c\|<r$, then $b\mid a-c$, thus $c=a+bn$ for some $n\in\mathbb Z$, thus $c\in a+b\mathbb Z$. We have obtained $B(a,r)\subset a+b\mathbb Z\subset A$, i.e., $A$ is open with respect to $d$, too.

According to the previous two paragraphs, $A\in\mathcal T$ if and only if $A$ is open with respect to the metric $d$, which means exactly that $d$ induces the topology $\mathcal T$.
\end{proof}

\begin{corollary}
A sequence $(a_n)_{n\in\mathbb N}$ in $\mathbb Z$ converges to $0$ in the topology $\mathcal T$ if and only if, for every $k\in\mathbb N^*$, there is a number $N\in\mathbb N$ such that $n\ge N$ implies $k\mid a_n$.
\end{corollary}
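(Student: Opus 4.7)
The plan is to reduce convergence in $\mathcal T$ to convergence in the metric $d$ of theorem 1, since $a_n\to 0$ in $\mathcal T$ exactly when $d(a_n,0)=\|a_n\|\to 0$ as a sequence of real numbers. Thus the whole statement becomes a direct translation between smallness of $\|a_n\|$ and divisibility properties of $a_n$, via the definition of the norm.

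For the ``only if'' direction I would fix $k\in\mathbb N^*$ and apply the definition of $\|a_n\|\to 0$ with $\varepsilon=1/k$: there exists $N$ such that $\|a_n\|<1/k$ whenever $n\ge N$. Unwinding the definition of $\|\cdot\|$, this says that the largest integer $m$ with $1\mid a_n,\dots,m\mid a_n$ satisfies $1/m<1/k$, hence $m>k$, and in particular $k\mid a_n$. (The case $a_n=0$ is trivial since every $k$ divides $0$.)

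For the ``if'' direction I would fix $\varepsilon>0$, choose $k\in\mathbb N^*$ with $1/k<\varepsilon$, and apply the hypothesis to each of $1,2,\dots,k$ separately, taking $N:=\max(N_1,\dots,N_k)$. Then for $n\ge N$ every one of $1,\dots,k$ divides $a_n$, so the maximum in the definition of $\|a_n\|$ is at least $k$, giving $\|a_n\|\le 1/k<\varepsilon$. (Equivalently, one could apply the hypothesis just once to the single integer $k!$, since divisibility by $k!$ forces divisibility by $1,\dots,k$.)

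Neither direction presents a real obstacle; the only subtlety worth pointing out is that one must not confuse the role of $k$ in the definition of $\|n\|$ (where it ranges over a \emph{consecutive} block $1,\dots,k$ of divisors) with the single $k$ appearing in the corollary. Once the metric of theorem 1 is in hand, both implications amount to reading off this definition with an appropriate choice of $\varepsilon$ and $k$.
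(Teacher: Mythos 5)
Your proof is correct, but it takes a different route from the paper. You pass through Theorem 1: convergence in $\mathcal T$ is equated with $d(a_n,0)=\|a_n\|\to 0$ in $\mathbb R$, and then both implications are extracted by unwinding the definition of the norm, with the correct care about the fact that $\|a_n\|<1/k$ encodes divisibility by the \emph{consecutive block} $1,\dots,m$ for some $m>k$, and that in the converse direction one must secure divisibility by all of $1,\dots,k$ (or, as you note, by $k!$) rather than by $k$ alone; both steps, and your handling of $a_n=0$, are sound. The paper instead argues purely topologically in one line: every neighborhood of $0$ in $\mathcal T$ contains a set of the form $k\mathbb Z$, and conversely each $k\mathbb Z$ is itself a neighborhood of $0$, so $a_n\to 0$ means precisely that for each $k$ the sequence eventually lies in $k\mathbb Z$, i.e.\ $k\mid a_n$ eventually. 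The paper's argument needs nothing beyond the definition of $\mathcal T$ (the sets $k\mathbb Z$ form a neighborhood base at $0$), so it is shorter and independent of the metrization theorem; your argument buys a concrete illustration of how the explicit norm $\|\cdot\|$ quantifies this divisibility-based notion of convergence, at the cost of invoking Theorem 1 and some $\varepsilon$ bookkeeping.
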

\begin{proof}
The statement follows from the fact that each neighborhood of $0$ contains a set of the form $k\mathbb Z$.
\end{proof}

Thus a sequence converges to $0$ if and only if any positive $k$ is a divisor of all members whose index is sufficiently large. A typical sequence converging to $0$ in our topology is $(n!)_{n\in\mathbb N}$. This too shows that our topology is different from the ordinary topology of $\mathbb Z$, since in the latter one a sequence can converge to $0$ only if all members but finitely many are zero.

This has a strange consequence as well. Namely, let
\[
a_0:=1,
\quad\mbox{and}\quad a_n:=(n+1)!-n!=n\cdot n!\quad\mbox{if}\quad n\ge 1.
\]
Then
$\sum_{n=0}^\infty a_n=0$
with respect to the topology $\mathcal T$, although every member of this series is a positive integer.

Recall that a topological space is said to be \emph{totally disconnected} if each of its maximal connected subsets consists of one single point.

\begin{theorem}
The topological space $(\mathbb Z,\mathcal T)$ is totally disconnected.
\end{theorem}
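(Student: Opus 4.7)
The plan is to exploit the key observation already noted in the excerpt: every basic open set $a+b\mathbb{Z}$ is simultaneously closed, since its complement in $\mathbb{Z}$ is the finite union of the other cosets modulo $b$, each of which is open. A topological space in which points can always be separated by clopen sets is totally disconnected, so the whole argument reduces to producing, for any two distinct integers $m\neq n$, a clopen set containing $m$ but not $n$.

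The natural way to do this is to pick $b\in\mathbb{N}^*$ strictly larger than $|m-n|$, for instance $b:=|m-n|+1$, and consider the arithmetic progression $m+b\mathbb{Z}$. Then $m\in m+b\mathbb{Z}$ trivially, while $n\notin m+b\mathbb{Z}$ because $b\nmid(n-m)$ (the difference is nonzero and smaller in absolute value than $b$). By the clopen remark above, $U:=m+b\mathbb{Z}$ and $V:=\mathbb{Z}\setminus U$ are both open, disjoint, and cover $\mathbb{Z}$, with $m\in U$ and $n\in V$.

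To finish, suppose $C\subset\mathbb{Z}$ is a connected set with two distinct points $m,n\in C$. Intersecting the separation $(U,V)$ constructed above with $C$ yields two nonempty disjoint relatively open sets whose union is $C$, contradicting connectedness. Hence every connected subset contains at most one point, which is exactly total disconnectedness.

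The main obstacle is essentially nothing beyond the clopen property of the basic sets, which has already been established in the excerpt; the only point requiring any care is choosing $b$ large enough to ensure that $n$ lies in a different coset from $m$, and the choice $b=|m-n|+1$ handles this uniformly.
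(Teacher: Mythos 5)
Your proof is correct and follows essentially the same route as the paper: both arguments separate two distinct points of a putative connected set by intersecting it with a clopen coset $m+b\mathbb{Z}$ (where $b\nmid m-n$) and with the union of the remaining cosets modulo $b$. The only cosmetic difference is that you fix the concrete choice $b=|m-n|+1$, while the paper just takes any $k$ not dividing the difference.
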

\begin{proof}
We show that if a set $A\subset\mathbb Z$ contains at least two different elements $a$ and $b$, then it cannot be connected. Let $k$ be a nonzero integer which is not a divisor of $b-a$. Then
\[
A\cap\{a+nk:n\in\mathbb Z\}\quad\mbox{ and }\quad A\cap\{a+1+nk,\dots,a+k-1+nk:n\in\mathbb Z\}
\]
are nonempty disjoint open subsets of $A$ whose union is $A$, thus $A$ is not connected.
\end{proof}

A \emph{topological ring} is a ring which is a topological space at the same time such that the ring operations are continuous with respect to the product topology, and so is the additive inversion.

\begin{theorem}
The set $\mathbb Z$ is a topological ring with respect to the usual addition and multiplication and the topology $\mathcal T$.
\end{theorem}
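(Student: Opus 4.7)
The plan is to exploit the metric $d$ furnished by Theorem~1 and verify continuity sequentially. Since the product topology on $\mathbb{Z}\times\mathbb{Z}$ is induced by, for example, $\tilde d((a,b),(c,d)):=\max\{d(a,c),d(b,d)\}$, the product is again a metric space, so it suffices to prove that each of the three ring operations sends convergent sequences to convergent sequences.

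Additive inversion is essentially free: because $\|-n\|=\|n\|$ for every $n\in\mathbb{Z}$, the map $n\mapsto -n$ is an isometry, hence continuous. For addition, assume $a_n\to a$ and $b_n\to b$. The subadditivity $\|x+y\|\le\|x\|+\|y\|$ already established inside the proof of Theorem~1 immediately gives
$$d(a_n+b_n,\,a+b)=\|(a_n-a)+(b_n-b)\|\le d(a_n,a)+d(b_n,b)\longrightarrow 0,$$
so $+$ is continuous.

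For multiplication, the key ingredient I would isolate first is a submultiplicative-type inequality: for all $m,n\in\mathbb{Z}$,
$$\|mn\|\le\|m\|.$$
This is trivial when $mn=0$, and otherwise follows because every divisor of $m$ is also a divisor of $mn$, so the longest initial segment $1,2,\dots,k$ of divisors of $m$ consists of divisors of $mn$ as well, which only enlarges the maximum in the denominator defining the norm. Given this, the standard identity
$$a_nb_n-ab=a_n(b_n-b)+b(a_n-a)$$
together with the already-used subadditivity of $\|\cdot\|$ and the submultiplicative bound applied to each summand yields
$$d(a_nb_n,ab)\le\|a_n(b_n-b)\|+\|b(a_n-a)\|\le\|b_n-b\|+\|a_n-a\|\longrightarrow 0.$$

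The only step that is not immediately provided by Theorem~1 is the submultiplicative inequality $\|mn\|\le\|m\|$, so I expect this to be the main obstacle, though it is quite mild; once it is recorded, the whole verification is a routine assembly of the three estimates above.
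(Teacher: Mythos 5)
Your proposal is correct, and its overall strategy (use the metric from Theorem~1 and check sequential continuity of the three operations) matches the paper's; the difference lies in how multiplication is handled. The paper argues directly with divisibility: using the characterization of convergence from the Corollary, it fixes $k\in\mathbb N^*$, takes $n$ large enough that $k\mid a_n-a$ and $k\mid b_n-b$, and concludes $a_nb_n\equiv ab\pmod k$, so $k\mid a_nb_n-ab$. You instead stay entirely inside the normed picture: you isolate the inequality $\|mn\|\le\|m\|$ (which is indeed valid, by the observation that every divisor of $m$ divides $mn$, and which the paper never states), and combine it with the decomposition $a_nb_n-ab=a_n(b_n-b)+b(a_n-a)$ and subadditivity to get a quantitative estimate $d(a_nb_n,ab)\le d(b_n,b)+d(a_n,a)$. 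The two arguments rest on the same elementary fact, but yours packages it as a norm inequality and yields an explicit modulus-of-continuity-type bound, while the paper's congruence argument is slightly shorter and needs no new lemma; your treatment of addition via subadditivity and of inversion via the isometry $\|-n\|=\|n\|$ is essentially the same as (or a mild variant of) the paper's.
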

\begin{proof}
The additive inversion $n\in\mathbb Z\mapsto -n$ is obviously continuous. To prove the continuity of addition, by translation invariance, it is enough to show that it is continuous at the point $(0,0)$. If $\varepsilon>0$, and $\|a\|,\|b\|<\varepsilon$, then $\|a+b\|<\varepsilon$ by the properties of the norm.

To prove the continuity of multiplication, we show that $a_nb_n\rightarrow ab$ if $a_n\rightarrow a$ and $b_n\rightarrow b$. Indeed, if $k\in\mathbb N^*$, then there exist $N_1,N_2\in\mathbb N$ such that $n\ge N_1$ implies $k\mid a_n-a$, and $n\ge N_2$ implies $k\mid b_n-b$. If $N:=\max\{N_1,N_2\}$, and $n\ge N$, then
\begin{equation*}
a_n\equiv a,\ b_n\equiv b\pmod{k},\quad\mbox{therefore}\quad a_nb_n\equiv ab\pmod{k},
\end{equation*}
i.e., $k\mid a_nb_n-ab$.
\end{proof}

\section{An application}

As an application, we show that any two (possibly infinite) disjoint sets of primes can be separated by arithmetic progressions. So, for example, let
\[A=\{p_1,p_3,\dots\}\quad\mbox{and}\quad B=\{p_2,p_4,\dots\},\]
where $p_i$ is the $i$th positive prime number.

Then there are two sets, say $\mathcal A$ and $\mathcal B$, obtained as unions of arithmetic progressions, such that
\[A\subset\mathcal A,\;B\subset\mathcal B\quad\mbox{and}\quad\mathcal A\cap\mathcal B=\emptyset.\]
More precisely, the next theorem is valid.
\begin{theorem}
Let
\[
A=\{p_0,p_1,p_2,\dots\}\quad
\mbox{and}\quad B=\{q_0,q_1,q_2,\dots\}
\]
two disjoint sets of positive primes. Then there are numbers $a_0,a_1,a_2,\ldots\in\mathbb N^*$ and $b_0,b_1,b_2,\ldots\in\mathbb N^*$ such that
\begin{gather*}
A\subset\{p_0+k_0a_0,p_1+k_1a_1,p_2+k_2a_2,\ldots:k_i\in\mathbb Z\},\\
B\subset\{q_0+k_0b_0,q_1+k_1b_1,q_2+k_2b_2,\ldots:k_i\in\mathbb Z\},
\end{gather*}
and
\[
p_i+k_ia_i\neq q_j+k_jb_j
\]
for any $i,j\in\mathbb N$ and $k_i,k_j\in\mathbb Z$.
\end{theorem}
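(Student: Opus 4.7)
The plan is to invoke the metrizability of $\mathcal T$ from Theorem~1 and apply the classical ``disjoint balls'' separation argument available in every metric space. More precisely, I will produce radii $r_i,s_j>0$ such that the open balls $B(p_i,r_i)$ and $B(q_j,s_j)$ in $(\mathbb Z,d)$ are pairwise disjoint across all $i,j$, and then pick $a_i,b_j\in\mathbb N^*$ so that the basic progressions $p_i+a_i\mathbb Z$ and $q_j+b_j\mathbb Z$ lie inside the corresponding balls.

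The only place where the arithmetic of primes really matters is the preliminary claim that $d(p_i,B)>0$ for every $i$. I would derive this from the observation that for any positive prime $r$, the basic open set $r+r\mathbb Z=r\mathbb Z$ contains no positive prime other than $r$ itself, since every other multiple of $r$ is composite. Thus $p_i+p_i\mathbb Z$ is a neighborhood of $p_i$ whose intersection with $B$ (a set of positive primes) could only be $\{p_i\}$, and this intersection is empty because $A\cap B=\emptyset$. Hence $p_i\notin\overline B$, which in a metric space is equivalent to $d(p_i,B)>0$; the symmetric reasoning gives $d(q_j,A)>0$.

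Setting $r_i:=\tfrac12 d(p_i,B)$ and $s_j:=\tfrac12 d(q_j,A)$, both positive, the triangle inequality forces $B(p_i,r_i)\cap B(q_j,s_j)=\emptyset$, because a point $x$ in both would yield $d(p_i,q_j)\le d(p_i,x)+d(x,q_j)<r_i+s_j\le d(p_i,q_j)$. The proof of Theorem~1 provides a concrete way to shrink each ball down to a basic progression: picking $n_i\in\mathbb N^*$ with $1/n_i<r_i$ and setting $a_i:=n_i!$ gives $p_i+a_i\mathbb Z\subset B(p_i,r_i)$, and analogously for $b_j$. The unions $\mathcal A:=\bigcup_i(p_i+a_i\mathbb Z)$ and $\mathcal B:=\bigcup_j(q_j+b_j\mathbb Z)$ then contain $A$ and $B$ respectively, and the elements $p_i+k_ia_i$ and $q_j+k_jb_j$ are pairwise distinct across $i,j$, which is exactly the conclusion of the theorem.

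The main obstacle is the preliminary step $d(p_i,B)>0$: without the number-theoretic fact that $p_i\mathbb Z$ isolates $p_i$ from the other positive primes, there is no way to start the metric argument. It is worth noting that the analogous claim would fail for $\pm 1$ in place of a prime, since by Dirichlet's theorem every basic neighborhood of $\pm 1$ contains infinitely many primes; this confirms that the theorem genuinely uses the primality of the elements of $A$ and $B$, not merely their pairwise distinctness.
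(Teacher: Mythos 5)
Your proof is correct, and it takes a genuinely different route from the paper's. The paper restricts to the subspace $\widetilde{\mathbb Z}:=\{n\in\mathbb Z:n\ge 2\}$, shows that $A$ and $B$ are \emph{closed} there (using the same arithmetic fact you use, namely that for $n\ge 2$ the set $n\mathbb Z$ contains no prime other than possibly $n$), invokes the abstract fact that a metrizable space is normal to separate them by open sets, assumes these to be unions of basic progressions centred at the $p_i$ and $q_j$, and then still needs an extra step: the separation only gives $\widetilde{\mathbb Z}\cap U\cap V=\emptyset$, and disjointness of $U$ and $V$ in all of $\mathbb Z$ is recovered by observing that a solvable linear Diophantine equation $p_i+k_ia_i=q_j+k_jb_j$ has infinitely many solutions, some with both sides $\ge 2$. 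You instead work in the full space $(\mathbb Z,d)$ and make the normality argument explicit: the key point $d(p_i,B)>0$ (and symmetrically $d(q_j,A)>0$) follows from $p_i\mathbb Z\cap B=\emptyset$, so you only need $A$ and $B$ to be \emph{separated} sets, not closed; the half-distance balls $B(p_i,\tfrac12 d(p_i,B))$ and $B(q_j,\tfrac12 d(q_j,A))$ are pairwise disjoint by the triangle inequality, and the factorial trick from the proof of Theorem~1 shrinks each ball to a basic progression. This buys you two simplifications over the paper: no passage to $\widetilde{\mathbb Z}$ and no Diophantine afterthought, since your progressions sit inside balls that are already disjoint in all of $\mathbb Z$; it also produces the radii, and hence the moduli $a_i=n_i!$, $b_j=n_j!$, completely explicitly. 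The paper's version, in exchange, exhibits the general topological principle (metrizable $\Rightarrow$ disjoint closed sets can be separated) more prominently, which is the point its authors wish to emphasize. Your closing remark about $\pm 1$ and Dirichlet's theorem correctly identifies why the paper must discard the points below $2$, and why primality, not mere disjointness, is essential.
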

We remark that the crucial point in the proof is a theorem of Urysohn \mbox{\cite[2.5.7]{KF}:} a topological space with a countable basis is metrizable if and only if the singletons are closed and any two disjoint closed sets can be separated by disjoint open sets.
\begin{proof}
Consider the set
\[
\widetilde{\mathbb Z}:=\{n\in\mathbb Z:n\ge 2\}
\]
with the topology induced by $\mathcal T$. The sets $A$ and $B$ are closed subsets of $\widetilde{\mathbb Z}$, since, e.g.,
\[
\widetilde{\mathbb Z}\setminus A
=\widetilde{\mathbb Z}\cap
\bigcup_{n\in\widetilde{\mathbb Z}\setminus A}n\mathbb Z,
\]
and similarly for $b$. Since $\widetilde{\mathbb Z}$ is a metrizable topological space, two disjoint closed sets can be separated by disjoint open sets, which we may assume to be of the forms $\widetilde{\mathbb Z}\cap U$ and $\widetilde{\mathbb Z}\cap V$, where
\begin{gather*}
U=\{p_0+k_0a_0,p_1+k_1a_1,p_2+k_2a_2,\ldots:k_i\in\mathbb Z\},\\
V=\{q_0+k_0b_0,q_1+k_1b_1,q_2+k_2b_2,\ldots:k_i\in\mathbb Z\}.
\end{gather*}
Note that, a priori, we do not know that $U$ and $V$ are disjoint, we only know that $\widetilde{\mathbb Z}\cap U\cap V=\emptyset$. If, however, we had
\[
p_i+k_ia_i=q_j+k_jb_j
\]
for some $i,j\in\mathbb N$ and $k_i,k_j\in\mathbb Z$, then this Diophantine equation would have infinitely many solutions for $k_i$ and $k_j$, and substituting some of these into the equation would render both sides $\ge 2$, which would contradict $\widetilde{\mathbb Z}\cap U\cap V=\emptyset$. This completes the proof that $U\cap V=\emptyset$.
\end{proof}

It is worth noting that the Diophantine equation
\[
a_ik_i-b_jk_j=q_j-p_i
\]
has a solution if and only if $(a_i,b_j)$, the greatest common divisor of $a_i$ and $b_j$, is a divisor of $p_i-q_j$. Therefore, the condition obtained on $a_i$ and $b_j$ is equivalent to $(a_i,b_j)\nmid\ p_i-q_j$ for any $i,j\in\mathbb N$.

\section*{Note}

Having completed this paper, we realized that in November 2009 another 
metric inducing this topology had been proposed, as far as we make it out, 
by Jim Ferry \cite{forum}. In a way which is
similar to ours, he defines a kind of norm
\[
\|n\|_1:=\sum_{k\in\mathbb N^*,k\nmid n}2^{-k}
\]
for $n\in\mathbb Z$, and then he lets $d_1(m,n):=\|m-n\|_1$ for $m,n\in\mathbb
Z$. He proves that this is indeed a metric, but he leaves it to 
heuristic judgement whether it induces F\"urstenberg's topology.

\end{document}